\documentclass[11pt]{amsart}
\usepackage{amssymb,amscd,amsmath}
\usepackage[all]{xy}
\newtheorem{thm}{Theorem}

\newtheorem{cor}[thm]{Corollary}

\newtheorem{prop}[thm]{Proposition}

\newtheorem{lem}[thm]{Lemma}

\theoremstyle{remark}

\theoremstyle{definition}


\newcommand{\F}{{\mathbb{F}}}

\newcommand{\Z}{{\mathbb{Z}}}

\newcommand{\GL}{\mathrm{GL}}
\newcommand{\SL}{\mathrm{SL}}

\newcommand{\Alt}{{\raise 2pt\hbox{$\scriptstyle\bigwedge$}}}
\newcommand{\AAA}{\mathsf{A}}
\newcommand{\SSS}{\mathsf{S}}

\newcommand{\la}{\lambda}

\begin{document}
\title[Partition length character bounds]
{Sharp character bounds for symmetric groups in terms of partition length}

\author{Michael Larsen}
\email{mjlarsen@iu.edu}
\address{Department of Mathematics\\
    Indiana University \\
    Bloomington, IN 47405\\
    U.S.A.}

\thanks{Michael Larsen was partially supported by NSF grant DMS-2401098 and Simons grant MPS-TSM-00007695.}

\begin{abstract}
Let $\SSS_n$ denote a symmetric group, $\chi$ an irreducible character of $\SSS_n$, and $g\in \SSS_n$ an element which decomposes into $k$ disjoint cycles, where $1$-cycles are included.
Then $|\chi(g)|\le k!$, and this upper bound is sharp for fixed $k$ and varying $n$, $\chi$, and $g$.  This implies a sharp upper bound of $k!$ for unipotent character values of $\SL_n(q)$ at regular semisimple elements with characteristic polynomial $P(t)=P_1(t)\cdots P_k(t)$, where the $P_i$ are irreducible over $\F_q[t]$.

\end{abstract}

\maketitle

Let $k$ be a fixed positive integer.  Let $n\ge k$ be an integer and  $g$ an element of the symmetric group $\SSS_n$ whose orbits partition $\{1,2,\ldots,n\}$ into $k$ disjoint sets.
Let $\chi$ denote an irreducible character of $\SSS_n$.  A theorem of Larsen and Shalev \cite[Theorem~7.2]{LS} asserts that 
\begin{equation}
\label{LS}
|\chi(g)| \le 2^{k-1}k!.  
\end{equation}
The fact that the right hand side of \eqref{LS} does not depend on $n$ is rather special to $\SSS_n$.
The centralizer bound for $g$ gives only $O(n^{k/2})$, and that is the most that is actually true if $g$ is assumed to be an even permutation and $\chi$ is taken to be
an irreducible character of $\AAA_n$ rather than $\SSS_n$.  (See \cite[Theorem 2.5.13]{JK}.)
However, \eqref{LS} is not the best possible bound.  The main result of this note is the following improvement:

\begin{thm}
\label{Main}
For $k$, $n$, $g$, and $\chi$ as above,
$$|\chi(g)| \le k!$$
\end{thm}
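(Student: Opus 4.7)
I plan to proceed by induction on $k$.

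For the base case $k = 1$, the element $g$ is an $n$-cycle; the Murnaghan--Nakayama rule forces $\chi^\lambda(g) \in \{-1, 0, 1\}$ (nonzero only when $\lambda$ is a hook), giving $|\chi^\lambda(g)| \le 1 = 1!$.

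For the inductive step, I assume the bound for elements with fewer than $k$ cycles, and let $g \in \SSS_n$ have cycle lengths $\mu_1 \ge \mu_2 \ge \cdots \ge \mu_k \ge 1$. I apply Murnaghan--Nakayama by peeling off a rim hook corresponding to the \emph{longest} cycle:
\[
\chi^\lambda(g) \;=\; \sum_\xi (-1)^{\mathrm{ht}(\xi)}\, \chi^{\lambda/\xi}(g'),
\]
where $\xi$ ranges over rim hooks of size $\mu_1$ in $\lambda$ and $g' \in \SSS_{n - \mu_1}$ has $k-1$ cycles. By the inductive hypothesis, $|\chi^{\lambda/\xi}(g')| \le (k-1)!$, so it suffices to show that the number of rim hooks of size $\mu_1$ in $\lambda$ is at most $k$.

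This rim-hook count is the main step, and for it I would invoke the classical $\mu_1$-abacus description of $\lambda$: rim hooks of size $\mu_1$ in $\lambda$ correspond bijectively to removable corners of the components of the $\mu_1$-quotient $(\lambda^{(0)}, \ldots, \lambda^{(\mu_1 - 1)})$. Since any partition $\nu$ has at most $|\nu|$ corners, the total count is at most
\[
\sum_{j=0}^{\mu_1 - 1} |\lambda^{(j)}| \;=\; \frac{n - |\mathrm{core}_{\mu_1}(\lambda)|}{\mu_1} \;\le\; \frac{n}{\mu_1}.
\]
The crucial point, and the reason to peel off the longest cycle, is the inequality $n = \mu_1 + \cdots + \mu_k \le k\mu_1$, which yields $n/\mu_1 \le k$. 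Combining with the inductive estimate gives $|\chi^\lambda(g)| \le k \cdot (k-1)! = k!$.

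The only subtle point is the choice of which cycle to strip in Murnaghan--Nakayama. Removing a shorter cycle of length $\mu_i$ would replace $n/\mu_1$ by $n/\mu_i \ge n/\mu_1$, which can exceed $k$ and destroy the argument; once the correct cycle is chosen, everything follows from a clean combination of Murnaghan--Nakayama with the standard abacus/$m$-quotient formalism.
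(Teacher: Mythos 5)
Your proposal is correct, and its skeleton is identical to the paper's: induction on $k$, peel off the longest cycle $r=\mu_1\ge n/k$ via Murnaghan--Nakayama, and win by showing that the number $t$ of rim hooks of length $r$ in $\lambda$ satisfies $t\le n/r\le k$. The only genuine difference is how that rim-hook count is established. The paper proves $tr\le n$ from scratch (Lemmas~\ref{hooks} and~\ref{key ineq}) by an elementary analysis of the rim: each length-$r$ rim hook is anchored at a ``special'' point $(i,\lambda_i)$ or $(\lambda'_j,j)$, the quantities $\lambda_i-i$ and $\lambda'_j-j$ dominate $r$ (or sum to $r-1$ in the central case), and summing over the at most $2k$ special points gives $tr\le n-k+C_r\le n$. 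You instead invoke the $r$-abacus: length-$r$ rim hooks biject with removable corners of the components of the $r$-quotient, a partition $\nu$ has at most $|\nu|$ removable corners, and $\sum_j|\lambda^{(j)}|=(n-|\mathrm{core}_r(\lambda)|)/r\le n/r$. Both arguments are valid and give exactly the same inequality; yours leans on standard (but heavier) core/quotient machinery where the paper is self-contained, and the paper's version of the lemma has the side benefit of being reused verbatim in the proof of Theorem~\ref{Fixed}. Your closing remark about why one must strip the \emph{longest} cycle is exactly the point the paper is silently using as well.
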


This estimate is optimal:

\begin{thm}
\label{Sharp}
For all $k\ge 1$, there exist $n$, $g\in \SSS_n$, and $\chi$ an irreducible character of $\SSS_n$ such that
$g$ partitions $\{1,\ldots,n\}$ into $k$ orbits, and $|\chi(g)| = k!$.
\end{thm}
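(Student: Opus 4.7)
The plan is to exhibit an explicit triple $(n, g, \chi)$ attaining the bound.  Guided by small cases (for $k=1$ any hook character gives $|\chi|=1$; for $k=2$ one computes $\chi^{(2,2)}$ on cycle type $(2,2)$ equals $2$), I take $n = k^2$, $g \in \SSS_{k^2}$ of cycle type $(k,k,\ldots,k)$, and $\chi = \chi^\lambda$ with $\lambda = (k,k,\ldots,k)$ the $k \times k$ square; here $g$ has exactly $k$ orbits, each of size $k$, as required.

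To verify $\chi^\lambda(g) = k!$, my main tool is the Frobenius character formula in $k$ variables.  With $\delta = (k-1, k-2, \ldots, 0)$, the value $\chi^\lambda(g)$ equals the coefficient of $x_1^{2k-1} x_2^{2k-2} \cdots x_k^{k}$ in $p_k(x_1, \ldots, x_k)^k \cdot \prod_{i<j}(x_i - x_j)$.  Expanding $p_k^k = \sum_{c_1+\cdots+c_k=k} \binom{k}{c_1,\ldots,c_k} \prod_i x_i^{k c_i}$ and the Vandermonde as $\sum_\tau \mathrm{sgn}(\tau) \prod_i x_i^{k - \tau(i)}$, matching exponents forces $k c_i - \tau(i) = k - i$ for each $i$.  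Since $\tau$ is a permutation of $\{1, \ldots, k\}$ and $c_i \ge 0$, this system admits the unique solution $c_i = 1$ for all $i$ and $\tau = \id$, contributing $\binom{k}{1, \ldots, 1} \cdot \mathrm{sgn}(\id) = k!$.

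An equivalent (and arguably cleaner) route uses the Nakayama/James--Kerber formula for character values on cycle type $m^k$ in terms of the $m$-quotient of $\lambda$: a short abacus computation shows that the $k$-quotient of the $k \times k$ square consists of $k$ copies of the single-box partition $(1)$, so the formula directly yields $|\chi^\lambda(g)| = \binom{k}{1, 1, \ldots, 1} \prod_i \dim \chi^{(1)} = k!$.  There is no genuine obstacle here: the only work is mechanical bookkeeping of exponents (respectively, of signs and multinomials), and Theorem~\ref{Main} guarantees this construction cannot be improved.
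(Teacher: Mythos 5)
Your proof is correct, but it takes a genuinely different route from the paper. You exhibit a single explicit example, the $k\times k$ square $\lambda=(k,\ldots,k)$ evaluated at cycle type $k^k$ in $\SSS_{k^2}$, and compute the value directly from the Frobenius character formula; the exponent-matching argument is airtight (from $kc_i-\tau(i)=k-i$ one gets $kc_i=k-i+\tau(i)\in[1,2k-1]$, forcing $c_i=1$, $\tau=\id$). The paper instead builds a family of partitions in Frobenius coordinates, $\la=(a_1,\ldots,a_k\mid r-1-a_k,\ldots,r-1-a_1)$ with all $a_i$ and $r$ odd (simplest case: the staircase $(2k,2k-1,\ldots,1)$ at cycle type $(2k+1)^k$, so $n=k(2k+1)$), and runs an induction on Murnaghan--Nakayama in which the parity hypotheses force all $k$ rim-hook contributions at each stage to carry the same sign, so $k$ terms of size $(k-1)!$ add to $k!$. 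It is worth noting that your square does satisfy the combinatorial closure property of the paper's Lemma~\ref{symmetric} (with $r=k$, $a_i=k-i$) but \emph{not} its parity condition: the leg lengths of the $k$ rim $k$-hooks of the square are $k-1,k-2,\ldots,0$, so the signs in the first Murnaghan--Nakayama expansion alternate, and the paper's same-sign induction would not apply to your example; the Frobenius formula (or the $k$-quotient, which for the square is $k$ copies of $(1)$ with empty core) sidesteps this. Your construction is smaller ($n=k^2$ versus $n=k(2k+1)$) and self-contained for Theorem~\ref{Sharp}, whereas the paper's family is engineered to reuse the rim-hook machinery of Lemmas \ref{hooks}--\ref{key ineq} and, via Proposition~\ref{decreasing odd}, to feed directly into the sharpness part of Theorem~\ref{Fixed}, which your single example does not immediately provide. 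One small caveat: your second route via the $m$-quotient implicitly uses the standard fact that the total leg-length sign of a complete sequence of $k$-hook removals depends only on $\lambda$ and its $k$-core; this is true but should be cited, whereas your Frobenius computation needs no such input.
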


When $g$ has at least one fixed point, however, there is a stronger estimate.

\begin{thm}
\label{Fixed}
For $k$, $n$, $g$, and $\chi$ as above, if $g$ has at least one fixed point, then
$$|\chi(g)| \le (k-1)!,$$
and this bound is sharp for elements $g$ with $k$ orbits, of which one at least has size $1$.
\end{thm}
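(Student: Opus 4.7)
Write $\chi = \chi_\lambda$ for some $\lambda \vdash n$, and assume by relabeling that $g$ fixes $n$. Let $g' \in \SSS_{n-1}$ agree with $g$ on $\{1, \ldots, n-1\}$, so $g'$ has $k-1$ cycles. Applying Murnaghan--Nakayama to the $1$-cycle $(n)$, equivalently the branching rule for $\SSS_{n-1} \subset \SSS_n$, gives
$$\chi_\lambda(g) \;=\; \sum_{\mu \in \lambda^-} \chi_\mu(g'),$$
where $\lambda^-$ is the set of partitions of $n-1$ obtained by deleting a corner of $\lambda$. Theorem \ref{Main}, applied in $\SSS_{n-1}$, bounds each term by $|\chi_\mu(g')| \le (k-1)!$, so the real task is to bound the full signed sum by $(k-1)!$ rather than by the naive $|\lambda^-| \cdot (k-1)!$ from the triangle inequality.

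The plan is to proceed by induction on $k$. The base case $k=1$ forces $g$ to be the identity of $\SSS_1$ and is immediate; more generally, if $g$ is the identity of $\SSS_n$ (so $k=n$), the assertion reduces to the dimension bound $\dim \chi_\lambda \le (n-1)!$, easily verified. Otherwise $g$ has some cycle of length $m \ge 2$ \emph{together with} a fixed point, and applying Murnaghan--Nakayama to the longer cycle instead yields
$$\chi_\lambda(g) = \sum_{B} (-1)^{h(B)-1}\, \chi_{\lambda \setminus B}(g''),$$
where $B$ ranges over border strips of length $m$ in $\lambda$ and $g'' \in \SSS_{n-m}$ retains the fixed point while having $k-1$ cycles. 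The inductive hypothesis bounds each $|\chi_{\lambda \setminus B}(g'')| \le (k-2)!$, reducing the proof to controlling the effective number of border strips after the signs $(-1)^{h(B)-1}$ are taken into account. Showing that this effective count is at most $k-1$ is the main obstacle; the needed cancellation is expected to mirror whatever mechanism drives the bound $k!$ in the proof of Theorem \ref{Main}.

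For the sharpness assertion, begin with an extremal triple $(g^\circ, \chi_{\mu^\circ})$ from Theorem \ref{Sharp}, with $g^\circ \in \SSS_m$ having $k-1$ cycles and $|\chi_{\mu^\circ}(g^\circ)| = (k-1)!$. Form $g \in \SSS_{m+1}$ by extending $g^\circ$ to fix $m+1$, so $g$ has $k$ cycles including the new fixed point. Choose $\lambda \vdash m+1$ by adjoining a single box to $\mu^\circ$ in a position where every other element of $\lambda^-$ yields $\chi_\mu(g^\circ) = 0$; the branching identity then collapses to the single term $\chi_{\mu^\circ}(g^\circ)$, giving $|\chi_\lambda(g)| = (k-1)!$. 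The admissibility of such a choice of $\lambda$ follows by the same structural features of the partitions used to establish Theorem \ref{Sharp}.
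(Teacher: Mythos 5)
Your overall architecture is the right one --- induct on $k$, peel off a cycle by Murnaghan--Nakayama so that the remaining permutation keeps a fixed point, and build the sharp example by adjoining one box to an extremal partition from Theorem~\ref{Sharp} --- but the step you yourself flag as ``the main obstacle'' is precisely the missing step, and the mechanism you propose for it (sign cancellation among the border strips) is not the one that works. No cancellation is needed. The point is to choose the cycle you remove to be a \emph{largest} orbit among the $k-1$ orbits other than the distinguished fixed point, so that its length satisfies $r\ge \frac{n-1}{k-1}$. Lemma~\ref{key ineq} then bounds the number $t$ of rim hooks of length $r$ by
$$t\le \frac nr \le \frac{n(k-1)}{n-1} < k$$
(the last inequality because $n>k$ in the non-identity case), so $t\le k-1$, and the plain triangle inequality combined with the inductive bound $(k-2)!$ on each term already gives $(k-1)!$. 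Your version, which removes an arbitrary cycle of length $m\ge 2$, cannot be repaired by signs alone: a partition of $n$ can have on the order of $n/2$ rim hooks of length $2$, and there is no general cancellation in the Murnaghan--Nakayama sum. (Your separate treatment of the case $g=\mathrm{id}$, where no long cycle is available, is a legitimate and necessary case split; the dimension bound $\dim\chi_\lambda\le (n-1)!$ does dispose of it.)

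For sharpness, your plan coincides with the paper's, but ``adjoin a box in a position where every other element of $\lambda^-$ contributes $0$'' is an assertion rather than a proof: you must exhibit the box and prove the vanishing. The correct choice is to adjoin the box $(k,k)$ to the extremal diagram of Proposition~\ref{decreasing odd} for $k-1$, enlarging the Durfee square to size $k$; equivalently, one appends a part $0$ on each side of the Frobenius-type coordinates. Removing any other corner leaves a diagram that still has $k$ layers, and an irreducible character whose diagram has $k$ layers vanishes on every permutation with only $k-1$ cycles, because each rim-hook removal can shrink the Durfee square by at most one, so at least $k$ removals are needed to exhaust the diagram. That vanishing argument is exactly the content you are deferring to ``the same structural features,'' and it has to be supplied for the branching sum to collapse to the single term $\chi_{\mu^\circ}(g^\circ)$ of absolute value $(k-1)!$.
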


As an application of Theorems \ref{Main} and \ref{Sharp}, we obtain the optimal upper bound for unipotent character values of $\SL_n(\F_q)$ at regular semisimple elements.

\begin{cor}
\label{SL}
Let $k$ be a positive integer.  For all $n\ge k$, all prime powers $q$, all unipotent characters $\chi$ of $\SL_n(q)$,
and all regular semisimple elements $g\in \SL_n(q)$ whose characteristic polynomial decomposes into $k$ irreducible factors, we have
$$|\chi(g)| \le k!.$$
Moreover, this upper bound is sharp.
\end{cor}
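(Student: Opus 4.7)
The plan is to reduce the corollary to Theorems \ref{Main} and \ref{Sharp} via the classical character formula (originally due to Green, and reinterpreted through Deligne--Lusztig induction) relating unipotent character values of $\GL_n(\F_q)$ at regular semisimple elements to symmetric group characters. Concretely, if $g \in \GL_n(\F_q)$ is regular semisimple with characteristic polynomial $P = P_1 \cdots P_k$, where $P_i \in \F_q[t]$ is irreducible of degree $d_i$, then $g$ lies in a maximal torus of Frobenius type $w$, for some $w \in \SSS_n$ of cycle type $(d_1,\ldots,d_k)$, and for every $\lambda \vdash n$ the unipotent character $\chi_\lambda$ of $\GL_n(\F_q)$ satisfies
$$|\chi_\lambda(g)| = |\chi^\lambda(w)|,$$
where $\chi^\lambda$ denotes the irreducible character of $\SSS_n$ indexed by $\lambda$. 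Unipotent characters of $\SL_n(\F_q)$ coincide with the restrictions of unipotent characters of $\GL_n(\F_q)$, and these restrictions remain irreducible (since tensoring a unipotent character of $\GL_n$ by a nontrivial linear character factoring through $\det$ moves it out of the unipotent Lusztig series), so the same identity holds on $\SL_n(\F_q)$.

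The upper bound is then immediate: $w$ has exactly $k$ cycles, so Theorem \ref{Main} yields $|\chi_\lambda(g)| \le k!$.

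For sharpness, Theorem \ref{Sharp} supplies an integer $n$, a partition $\lambda \vdash n$, and $w \in \SSS_n$ with $k$ cycles of sizes $d_1,\ldots,d_k$ such that $|\chi^\lambda(w)| = k!$; it then suffices to produce, for some prime power $q$, a regular semisimple $g \in \SL_n(\F_q)$ whose characteristic polynomial factors into irreducibles in $\F_q[t]$ of degrees $d_1,\ldots,d_k$. For $q$ sufficiently large, such irreducibles exist in abundance, and the determinant-one constraint $\prod_i P_i(0) = (-1)^n$ can be met by choosing one factor with a prescribed nonzero constant term, using the surjectivity of the norm $N \colon \F_{q^{d_k}}^* \to \F_q^*$ combined with the observation that for $q$ large most elements of $\F_{q^{d_k}}^*$ do not lie in any proper subfield. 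The main obstacle I anticipate is this last step---the passage from $\GL_n$ to $\SL_n$ in the sharpness construction---but it reduces to a routine counting argument requiring no further representation-theoretic input.
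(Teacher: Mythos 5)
Your proposal is correct and follows essentially the same route as the paper: both reduce the corollary to Theorems \ref{Main} and \ref{Sharp} via the identity $|\chi_\lambda(g)| = |\chi^\lambda(w)|$ relating unipotent character values at regular semisimple elements to symmetric group characters evaluated at the permutation induced by Frobenius (the paper cites Lassueur--Malle, Proposition~3.3, for this). The only difference is that you explicitly verify the existence of a determinant-one regular semisimple element realizing the required factorization type---a point the paper leaves implicit---and your norm-map argument for that step is sound.
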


The irreducible characters $\chi$ of $\SSS_n$ are labeled by partitions $\la\vdash n$, where $\lambda_1 \ge \lambda_2\ge \cdots \ge\lambda_r > \lambda_{r+1}=0$.  When we wish to specify the partition of an irreducible character, we write $\chi=\chi_\la$.
To each $\la$, we associate the Young diagram
$$Y_\la = \{(i,j)\in \Z^2\mid i\in [1,r],\ j\in [1, \lambda_i]\}.$$
We regard $Y_\la$ as a graph in which two vertices are connected if their Euclidean distance is $1$.
The \emph{rim} $R_\la$ of $Y_\la$ consists of $(x,y)\in Y_\la$ such that $(x+1,y+1)\not\in Y_\la$.
A subset $H\subset R_\la$ is said to be a \emph{rim hook} if it satisfies the following two properties:
\begin{itemize}
\item $H$ is a connected subset of $R_\la$.
\item The complement of $H$ in $Y_\la$ is the Young diagram $Y_\mu$ of some partition $\mu$.
\end{itemize}
Note that we can express the second condition as saying that, assuming $(x,y)\in H$, if $(x+1,y)$ belongs to $Y_\la$, then it belongs to $H$, and likewise for $(x,y+1)$.

The elements $(x,y)$ of $R_\la$ can be arranged by increasing lexicographic order on $(x,-y)$:
\begin{equation}
\label{rim}
(1,\lambda_1),(1,\lambda_1-1),\ldots,(1,\lambda_2),(2,\lambda_2),\ldots,(2,\lambda_3),(3,\lambda_3),\ldots
\end{equation}
Note that the value of $x-y$ increases by $1$ at each step and that two elements of $R_\la$ are connected if and only if they are consecutive terms in \eqref{rim}.  All rim hooks are therefore consecutive subsequences of \eqref{rim}.
The condition that the complement of $H$ in $Y_\la$ is a Young diagram is that the first term in the rim hook is of the form $(i,\lambda_i)$ for some $i$ and the last term is of the form $(\lambda'_j,j)$ for some $j$, where $\lambda'$ is the dual partition to $\lambda$.


We define the $i$th \emph{layer} of $Y_\la$ as the set of $(x,y)\in Y_\la$ with $\min(x,y) = i$.
If $k$ is the largest integer such that $(k,k)\in Y_\la$, then $Y_\la$ has $k$ layers.
The elements of $Y_\la$ of the form $(i,\lambda_i)$, where $1\le i\le k$, or of the form $(\lambda'_j,j)$, where $1\le j\le k$, are \emph{special}, so there are $2k$ special elements in all.  If the first term of a rim hook occurs in \eqref{rim} before $(k,k)$,
then it is special, and likewise for the last term of a rim hook if it appears after $(k,k)$.  We say a rim hook is \emph{central} if it contains $(k,k)$, in which case both of its endpoints are special.  A rim hook which is not central is either \emph{left} or \emph{right}.

If we fix a positive integer $r$ and consider rim hooks of length $r$, each special point of the form $(i,\lambda_i)$ for $1\le i\le k$ is the left endpoint of at most one such rim hook, and each special point of the form $(\lambda'_j,j)$ for $1\le j\le k$ is the right endpoint of at most one such rim hook.

\begin{lem}
\label{hooks}
Let $H$ be a rim hook of length $r$.
\begin{enumerate}
\item If $H$ is non-central, $1\le i\le k$, and $(i,\lambda_i)$ is the left endpoint of $H$, then
$r\le \lambda_i - i$.
\item  If $H$ is non-central, $1\le j\le k$, and $(\lambda'_j,j)$ is the right endpoint of $H$, then
then $r\le \lambda'_j-j$.
\item If $H$ is central, $(i,\lambda_i)$ is the left endpoint and $(\lambda'_j,j)$ the right endpoint of $H$, then
$r = 1+\lambda_i-i+\lambda'_j-j$.
\end{enumerate}
\end{lem}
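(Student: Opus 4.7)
The plan is to parametrize the rim by the diagonal coordinate $x - y$. As observed in the excerpt, $x - y$ increases by exactly $1$ at each step of the sequence \eqref{rim}, so it injects $R_\lambda$ into $\Z$ and identifies every rim hook, being a consecutive subsequence of \eqref{rim}, with an interval of consecutive integers. Since $(k,k) \in Y_\lambda$ while $(k+1,k+1) \notin Y_\lambda$, the point $(k,k)$ lies on $R_\lambda$ and is the unique rim point with $x - y = 0$; centrality of a rim hook $H$ is precisely the condition that the corresponding interval of $x - y$ values contains $0$.

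Part (3) then follows immediately. The endpoints $(i, \lambda_i)$ and $(\lambda'_j, j)$ contribute $x - y$ values $i - \lambda_i$ and $\lambda'_j - j$ respectively, so the interval associated to $H$ runs from $i - \lambda_i$ up to $\lambda'_j - j$, and its length is
$$r = (\lambda'_j - j) - (i - \lambda_i) + 1 = 1 + (\lambda_i - i) + (\lambda'_j - j).$$

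For part (1), the left endpoint $(i, \lambda_i)$ gives $x - y = i - \lambda_i \le 0$, since $1 \le i \le k$ forces $\lambda_i \ge k \ge i$. Non-centrality forbids $0$ from the interval of $x-y$ values attained on $H$, so this interval lies entirely in $\Z_{\le -1}$. Its largest element is $i - \lambda_i + r - 1$, hence $i - \lambda_i + r - 1 \le -1$, giving $r \le \lambda_i - i$. Part (2) is the mirror image: the right endpoint yields $x - y = \lambda'_j - j \ge 0$, non-centrality forces the interval into $\Z_{\ge 1}$, and the analogous computation gives $r \le \lambda'_j - j$.

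The entire argument is essentially a bookkeeping exercise in the coordinate $x - y$, and I do not expect any serious obstacle beyond setting up the correspondence between rim hooks and integer intervals, and keeping straight which endpoint of $H$ corresponds to the smaller value of $x - y$.
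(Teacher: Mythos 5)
Your proof is correct and takes essentially the same route as the paper: both arguments rest on the observation that $x-y$ increases by $1$ at each step along the rim, so that rim hooks correspond to intervals of consecutive integers with $(k,k)$ sitting at $0$, and the three parts reduce to counting terms in these intervals. Your phrasing via intervals in $\Z$ is just a mild repackaging of the paper's count of consecutive terms of \eqref{rim} between $(i,\lambda_i)$, $(k,k)$, and $(\lambda'_j,j)$.
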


\begin{proof}
Since the value of $x-y$ increases by one at each step of \eqref{rim}, it follows
that the consecutive sequence from $(i,\lambda_i)$ to $(k,k)$ has $1+\lambda_i-i$ terms.  The consecutive subsequence of length $r$ starting at $(i,\lambda_i)$ does not include $(k,k)$, so $r\le \lambda_i-i$, which gives part (1).  Part (2) is exactly the same, and for part (3), we observe that the length of the consecutive subsequence of \eqref{rim} from $(i,\lambda_i)$ to $(\lambda'_j,j)$ can be computed by subtracting $i-\lambda_i$ from $\lambda'_j-j$ and adding $1$.
\end{proof}

\begin{lem}
\label{key ineq}
If there are $t$ rim hooks of length $r$, then $tr \le n$.
\end{lem}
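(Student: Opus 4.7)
The plan is to assign to each of the $t$ length-$r$ rim hooks a set of exactly $r$ cells of $Y_\la$ in such a way that the $tr$ assigned cells are pairwise distinct; the bound $tr\le n$ follows at once. First I use Lemma \ref{hooks} to classify these rim hooks into three types: a non-central left hook indexed by its left endpoint $(i,\la_i)$ with $1\le i\le k$ and $\la_i - i \ge r$; a non-central right hook indexed by its right endpoint $(\la'_j,j)$ with $1\le j\le k$ and $\la'_j - j\ge r$; or a central hook indexed by a pair $(i,j)$ with $1\le i,j\le k$ satisfying $(\la_i-i) + (\la'_j - j) = r - 1$.

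I now distribute cells to the hooks. A non-central left hook at row $i$ is assigned the $r$ cells $(i, i+1), (i,i+2), \ldots, (i, i+r)$, all strictly above the diagonal and lying in $Y_\la$ by Lemma \ref{hooks}(1). Symmetrically, a non-central right hook at column $j$ is assigned $(j+1, j), \ldots, (j+r, j)$, strictly below the diagonal. For a central hook at $(i,j)$, set $p = \la_i - i$ and $q = \la'_j - j$ (so $p+q+1 = r$) and assign to it the $p$ cells $(i,i+1), \ldots, (i, \la_i)$ above the diagonal in row $i$, the $q$ cells $(j+1, j), \ldots, (\la'_j, j)$ below the diagonal in column $j$, and the diagonal cell $(i,i)$.

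The remaining work is to verify the $tr$ cells are pairwise distinct. Left-hook assignments sit in disjoint rows and right-hook assignments in disjoint columns; every left-hook cell has column strictly greater than its row while every right-hook cell has row strictly greater than its column, so a left and a right assignment can never collide, and the same inequality keeps the row portion of any central hook disjoint from the column portion of any hook and from any central hook's diagonal cell. The index sets of non-central left and of central hooks are also disjoint in their first coordinate, because $\la_i - i\ge r$ for the former and $\la_i - i\le r-1$ for the latter, and analogously for right versus central in the second coordinate. The only nontrivial step -- and the main obstacle to a naive approach -- is that several central hooks of length $r$ may coexist, each wanting a diagonal cell; this is handled by observing that distinct central hooks have distinct first coordinates $i$, since the equation $(\la_i - i) + (\la'_j - j) = r-1$ together with the strict decrease of $j \mapsto \la'_j - j$ on $\{1,\ldots,k\}$ determines $j$ from $i$, so the cells $(i,i)$ are automatically distinct and all lie in the Durfee square of $Y_\la$.
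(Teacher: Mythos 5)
Your proof is correct and is essentially the paper's own argument made concrete: the arm cells $(i,i+1),\ldots,(i,\lambda_i)$ number $\lambda_i-i$, the leg cells number $\lambda'_j-j$, and the Durfee-diagonal cells number at most $k$, so your injection of $tr$ distinct cells into $Y_\la$ is exactly the paper's comparison $\sum(\lambda_i-i)+\sum(\lambda'_j-j)+(\text{number of central hooks})\le (n-k)+k=n$, driven by the same inputs (Lemma~\ref{hooks}, distinctness of the special endpoints, and distinctness of the diagonal cells attached to central hooks). The repackaging as an explicit cell assignment is a nice visualization but not a different route.
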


\begin{proof}
Let $C_r$ denote the number of central rim hooks of length $r$.  Since each such rim hook is determined by its left enpoint, which must be of the form $(i,\lambda_i)$ for $1\le i\le k$,  so $C_r\le k$.

Taking the left endpoint of each left rim hook of length $r$, the right endpoint of each right rim hook of length $r$, and both endpoints of each central rim hook, all the terms in question are special, and no two are the same.  Summing the expression $\lambda_i-i$ for each $(i,\lambda_i)$ which appears as a left endpoint in this way, together with the expression $\lambda'_j-j$ for each $(\lambda'_j,j)$
that appears as a right endpoint, Lemma~\ref{hooks} implies the result is at least $tr$.  On the other hand, since $\lambda_i-i \ge 0$
and $\lambda'_j \ge j$ for all $i,j\le k$, the sum is at most
$$\sum_{i=1}^k (\lambda_i-i) + \sum_{j=1}^k (\lambda'_j-j) + C_r = n-k + C_r\le n.$$
\end{proof}

We next recall the Murnaghan-Nakayama Rule \cite[2.4.7]{JK}.

\begin{thm}\label{murn} {\rm (Murnaghan-Nakayama Rule)} 
Let $\rho \pi \in \SSS_n$, where
$\rho$ is an $r$-cycle and $\pi$ is a permutation of the $n-r$ fixed
points of $\rho$. Then 
\[
\chi_\la (\rho \pi) = \sum_\nu (-1)^{l(\nu)} \chi_{\la \backslash \nu} (\pi),
\]
where the sum is over all rim hooks $\nu$ of length $r$ in a $\la$-diagram,
and $l(\nu)$ is one less than the number of rows in the rim hook $\nu$.
\end{thm}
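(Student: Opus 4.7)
The plan is to prove the Murnaghan--Nakayama rule by transporting it via the Frobenius characteristic isomorphism $\mathrm{ch}\colon \bigoplus_n R(\SSS_n) \to \Lambda$ onto a multiplicative identity in the ring $\Lambda$ of symmetric functions. Under $\mathrm{ch}$ one has $\chi_\la \mapsto s_\la$, and for every partition $\mu \vdash n$ the power sum satisfies $p_\mu = \sum_{\la\vdash n}\chi_\la(\mu)\, s_\la$, so $\chi_\la(w)$ for $w$ of cycle type $\mu$ is precisely the coefficient of $s_\la$ in the Schur expansion of $p_\mu$.

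Writing the cycle type of $\rho\pi$ as $(r)\cup \mu'$, where $\mu'\vdash n-r$ is the cycle type of $\pi$ on the $n-r$ points that $\rho$ fixes, we have $p_{(r)\cup \mu'} = p_r\cdot p_{\mu'}$. Substituting $p_{\mu'} = \sum_\nu \chi_\nu(\pi)\, s_\nu$ and extracting the coefficient of $s_\la$, the identity asserted in Theorem~\ref{murn} reduces to the Pieri-type multiplication rule
\[
p_r \cdot s_\mu \;=\; \sum_{\eta} (-1)^{l(\eta/\mu)}\, s_\eta,
\]
where $\eta$ ranges over the partitions obtained from $\mu$ by attaching a rim hook $\eta/\mu$ of length $r$ and $l(\eta/\mu)$ denotes the leg length (one less than the number of rows).

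The heart of the proof, and the principal obstacle, is this Pieri-type identity. I would establish it by combining the Jacobi--Trudi formula $s_\mu = \det\bigl(h_{\mu_i-i+j}\bigr)$ with the classical expansion
\[
p_r \;=\; \sum_{k=0}^{r-1} (-1)^k\, s_{(r-k,\,1^k)}
\]
of a power sum as a signed sum of hook Schur functions. Multiplying $s_\mu$ by each hook $s_{(r-k,1^k)}$ via the Littlewood--Richardson rule produces signed sums indexed by skew shapes of size $r$; the delicate point is to verify that contributions from skew shapes which fail to be connected rim hooks cancel across the values of $k$, while a genuine rim hook $\eta/\mu$ survives with sign $(-1)^{l(\eta/\mu)}$, the value of $k$ that produces it being exactly the leg length. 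This cancellation, in which the connectivity requirement in the definition of a rim hook is forced upon us, is the only step that demands real combinatorial work; once it is in place, Theorem~\ref{murn} follows by reading off the coefficient of $s_\la$ in $p_r \cdot \sum_\nu \chi_\nu(\pi)\, s_\nu$.
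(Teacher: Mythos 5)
The paper does not prove this statement at all: the Murnaghan--Nakayama rule is quoted as a known classical result with a citation to \cite[2.4.7]{JK}, where it is derived from the determinantal form of the irreducible characters. So any proof you give is necessarily ``a different route from the paper,'' and the symmetric-function route you choose is a standard and legitimate one. Your reduction is correct: under the Frobenius characteristic, $\chi_\la$ of an element of cycle type $\mu$ is the coefficient of $s_\la$ in $p_\mu$, the factorization $p_{(r)\cup\mu'}=p_r\,p_{\mu'}$ holds, and the theorem is exactly equivalent to the identity $p_r\, s_\mu=\sum_\eta(-1)^{l(\eta/\mu)}s_\eta$, summed over partitions $\eta\supset\mu$ with $\eta/\mu$ a rim hook of size $r$.

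The gap is that this last identity --- which you yourself flag as ``the heart of the proof'' --- is only described, not established. Expanding $p_r=\sum_{k=0}^{r-1}(-1)^k s_{(r-k,1^k)}$ and multiplying by $s_\mu$ via Littlewood--Richardson produces, for each $k$, a multiplicity-counted sum over skew shapes $\eta/\mu$ of size $r$ admitting an LR filling of content $(r-k,1^k)$; the assertion that all contributions from non-rim-hook shapes cancel across $k$, and that each genuine rim hook survives exactly once with sign $(-1)^{l(\eta/\mu)}$, is precisely the combinatorial content of the theorem and cannot simply be asserted. One needs either an explicit sign-reversing involution on these LR fillings or a different computation. A cleaner way to close the gap along the lines you began: use the bialternant form $s_\mu=a_{\mu+\delta}/a_\delta$, so that $p_r\,a_{\mu+\delta}=\sum_i a_{\mu+\delta+r\epsilon_i}$; straightening each alternant $a_{\mu+\delta+r\epsilon_i}$ (it is zero if two entries coincide, and otherwise equals $\pm a_{\eta+\delta}$ for a unique partition $\eta$) yields exactly the rim hooks $\eta/\mu$ of size $r$ with the sign $(-1)^{l(\eta/\mu)}$ recording the number of transpositions needed to reorder the exponent vector. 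With that step supplied, your argument is complete; as written, the central identity is named but not proved.
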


We can now prove Theorem~\ref{Main}.

\begin{proof}
We use induction on $k$.  The case $k=1$ is well known and is an immediate consequence of Theorem~\ref{murn}.
Suppose the theorem is known up to $k-1$.  Let $g$ be a permutation of $\{1,2,\ldots,n\}$ with $k$ orbits.  At least one orbit must have size $r\ge n/k$, so we can write $g$ as the product of an $r$-cycle and a permutation of the remaining $n-r$
elements, with a total of $k-1$ orbits.  By Theorem~\ref{murn},
$$|\chi(g)| = |\chi_\lambda(g)| \le \sum_\nu |\chi_{\lambda\setminus\nu}(g)|.$$

By the induction hypothesis, each summand is bounded above by $(k-1)!$.  By Lemma~\ref{key ineq}, $tr \le n$, so $t\le k$, and we are done.
\end{proof}

To present examples showing that Theorem~\ref{Main} is sharp, 
we adopt the notation $(a_1,\ldots,a_k | b_1,\ldots b_k)$ for
the partition with $k$ layers for which $\lambda_i = a_i+i$ and $\lambda'_j = b_j + j$ for all $i,j\le k$.

\begin{lem}
\label{symmetric}
Let $k$ be a positive integer, $a_1>a_2>\cdots >a_k\ge 0$ a decreasing sequence of non-negative integers
and $r \ge a_1+1$ a positive integer.  Define 
$$b_j = r-1-a_{k+1-j}$$
and
\begin{equation}
\label{a to lambda}
\la = (a_1,\ldots,a_k|b_1,\ldots,b_k) = (a_1,\ldots,a_k|r+1-a_k,\ldots,r+1-a_1).
\end{equation}
There are $k$ rim hooks of length $r$ in $Y_\la$, and removing any one of them from $Y_\la$ leaves a Young diagram of the form $Y_\mu$ where for some $j\in [1,k]$, 
\begin{equation}
\label{omit}
\mu= (a_1,\ldots,\widehat{a_p},\ldots,a_k| r-1-a_k,\ldots,\widehat{r-1-a_p},\ldots, r-1-a_1).
\end{equation}
\end{lem}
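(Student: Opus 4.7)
My plan is to enumerate all length-$r$ rim hooks of $Y_\la$ using Lemma~\ref{hooks} and then identify the complement of each with a partition of the form \eqref{omit}.

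A non-central length-$r$ rim hook would, by Lemma~\ref{hooks}(1) or (2), satisfy $r \le a_i \le a_1 < r$ or $r \le b_j \le b_1 = r-1-a_k < r$, so every length-$r$ rim hook is central.  By Lemma~\ref{hooks}(3), with endpoints $(i,\la_i)$ and $(\la'_j,j)$, the length equation $1 + a_i + b_j = r$ combined with $b_j = r-1-a_{k+1-j}$ and the distinctness of the $a$'s forces $j = k+1-i$.  Hence there are exactly $k$ rim hooks $H_p$, indexed by $p \in \{1,\ldots,k\}$, each running between $(p,\la_p)$ and $(\la'_{k+1-p}, k+1-p)$ as a consecutive subsequence of \eqref{rim}.

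For each $p$, let $\mu^{(p)}$ be the partition defined in \eqref{omit} and set $q = k+1-p$.  The Frobenius area formula $|\la| = \sum_i a_i + \sum_j b_j + k$ gives $|\mu^{(p)}| = |\la| - a_p - (r-1-a_p) - 1 = |\la| - r$.  Reading off the Frobenius coordinates yields $\mu^{(p)}_i = \la_i$ for $i<p$, $\mu^{(p)}_i = \la_{i+1}-1$ for $p \le i \le k-1$, together with the dual identities for $\mu^{(p)\prime}_j$; these imply $\mu^{(p)} \subset \la$.  Since both $Y_\la/Y_{\mu^{(p)}}$ and $H_p$ have $r$ cells, it suffices to check that the skew shape $Y_\la/Y_{\mu^{(p)}}$ is a single rim hook with left endpoint $(p,\la_p)$: uniqueness from the first step then identifies it with $H_p$.

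The first missing cell of the skew shape is $(p,\la_p)$ directly from the row-length identities.  The no-$2\times 2$-square condition $\la_{x+1} \le \mu^{(p)}_x + 1$ is automatic for $x<p$, holds with equality for $p \le x \le k-1$ by construction, and for $x \ge k$ reduces to the identity
\[
\la_{x+1} - \mu^{(p)}_x \;=\; \mathbf{1}[\la'_q \ge x+1] \;-\; \#\{j < q : \la'_j = x\},
\]
which is visibly at most $1$.  The main obstacle is connectedness past row $k$: I would show that whenever row $x+1$ of the skew is nonempty, equality $\la_{x+1} = \mu^{(p)}_x + 1$ in fact holds, so that rows $x$ and $x+1$ share a column.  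Indeed, the analogous identity $\la_{x+1} - \mu^{(p)}_{x+1} = \mathbf{1}[\la'_q \ge x+1] + \#\{q+1 \le j \le k : \la'_j = x+1\}$, combined with the monotonicity of $\la'$, forces $\la'_q \ge x+1$ as soon as row $x+1$ of the skew is nonempty; monotonicity then gives $\la'_j > x$ for every $j < q$, so the displayed identity yields exactly $\la_{x+1} - \mu^{(p)}_x = 1$.
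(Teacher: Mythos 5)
Your proof is correct, and while the first half matches the paper, the second half takes a genuinely different route. The enumeration of the length-$r$ rim hooks is essentially the paper's argument via Lemma~\ref{hooks}, though you are more careful: you explicitly rule out non-central hooks using parts (1) and (2), and you show that the length equation $1+a_i+b_j=r$ together with the strict decrease of the $a_i$ forces $j=k+1-i$, so that there are \emph{exactly} $k$ hooks of length $r$ --- a point the paper's proof leaves implicit even though Proposition~\ref{decreasing odd} needs it. For the identification of the complement, the paper works forward: it removes $H_p$ and reads off the resulting row lengths directly ($\mu_i=\lambda_i$ for $i<p$, $\mu_i=\lambda_{i+1}-1$ for the rows the hook meets), then converts to Frobenius coordinates. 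You instead reverse-engineer: you posit $\mu^{(p)}$ as in \eqref{omit}, check $|\lambda|-|\mu^{(p)}|=r$ and $\mu^{(p)}\subseteq\lambda$, and verify that the skew shape $Y_\lambda/Y_{\mu^{(p)}}$ is connected and free of $2\times 2$ squares with first cell $(p,\lambda_p)$, so that uniqueness of the length-$r$ rim hook with that left endpoint identifies it with $H_p$. I checked your column-counting identities for rows at or below the Durfee square and the monotonicity argument that upgrades the ``no $2\times 2$'' inequality to equality on nonempty rows; both are correct. Your route is longer and requires this bookkeeping below row $k$ (which is exactly where the paper's row-shift formula, stated without proof for all $i\ge p$, needs the most care), but it buys a self-contained verification that never invokes the standard rim-hook-removal formula. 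A couple of steps are compressed --- that $\mu^{(p)}\subseteq\lambda$ also in rows $i\ge k$ needs the one-line count $\mu^{(p)}_i=\#\{j\le k-1:\mu^{(p)\prime}_j\ge i\}\le\lambda_i$, and the equivalence ``connected and $2\times2$-free implies contained in the rim'' is used silently --- but these are routine.
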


\begin{proof}
By Lemma~\ref{hooks} (3), the rim hook from $(p,\lambda_p)$ to $(\lambda'_{k+1-p},k+1-p)$ has length
$$1+\lambda_p-p+\lambda'_{k+1-p}-(k+1-p) = 1 + a_p + b_{k+1-p} = 1 + a_p + (r-1-a_p) = r.$$
Removing it from $Y_\la$ leaves $Y_\mu$, with 
$$\mu_i = \begin{cases}\lambda_i&\text{ if }i<p\\ \lambda_{i+1}-1&\text{ if }i\ge p.\\ \end{cases}$$
Writing
$$\mu = (c_1,\ldots,c_{k-1}|d_1,\ldots,d_{k-1}),$$
therefore, we have
$$c_i = \begin{cases}a_i&\text{ if }i<p\\ a_{i+1}&\text{ if }i\ge p.\\ \end{cases}$$
Likewise,
$$d_j =  \begin{cases}b_j&\text{ if }j<k-p\\ b_{j+1}&\text{ if }j\ge k-p.\\ \end{cases}$$
It follows that $\mu$ is given by \eqref{omit}.
\end{proof}

Theorem~\ref{Sharp} follows immediately from the following proposition:

\begin{prop}
\label{decreasing odd}
Let $k$ be a positive integer, $a_1>a_2>\cdots >a_k\ge 0$ a decreasing sequence of odd integers,
and $r > a_1$ an odd integer.  If $\la$ is given by \eqref{a to lambda}, $\chi$ is the irreducible
representation of $\SSS_{k r}$ associated to $\la$, and $g\in \SSS_{k r}$ consists of $k$ $r$-cycles, then
$$\chi(g) = (-1)^{\binom k2} k!.$$
\end{prop}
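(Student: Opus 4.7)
My plan is to prove the proposition by induction on $k$, applying the Murnaghan-Nakayama rule (Theorem~\ref{murn}) to peel off one $r$-cycle of $g$ at each step. The base case $k=1$ reduces to evaluating the character of a hook partition at an $r$-cycle, which follows from one application of Murnaghan-Nakayama: there is only one rim hook of length $r$ in a hook of size $r$, namely the whole diagram.

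For the inductive step, write $g=\rho\pi$, where $\rho$ is an $r$-cycle and $\pi$ is a product of the remaining $k-1$ disjoint $r$-cycles. Theorem~\ref{murn} then gives
\[
\chi_\lambda(g)\;=\;\sum_\nu (-1)^{l(\nu)}\,\chi_{\lambda\setminus\nu}(\pi),
\]
where $\nu$ ranges over rim hooks of length $r$ in $Y_\lambda$. By Lemma~\ref{symmetric}, there are exactly $k$ such rim hooks $\nu_1,\ldots,\nu_k$, and the complement $\lambda\setminus\nu_p$ is again of the form~\eqref{a to lambda}, with $k-1$ in place of $k$, the same $r$, and the sequence $(a_1,\ldots,\widehat{a_p},\ldots,a_k)$ in place of $(a_1,\ldots,a_k)$. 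In particular, the hypotheses of the proposition transfer verbatim, so the inductive hypothesis applies and yields $\chi_{\lambda\setminus\nu_p}(\pi)=(-1)^{\binom{k-1}{2}}(k-1)!$ for every $p$.

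What remains is the sign $(-1)^{l(\nu_p)}$. By Lemma~\ref{hooks}(3) the rim hook $\nu_p$ spans rows $p$ through $\lambda'_{k+1-p}$, so
\[
l(\nu_p)\;=\;\lambda'_{k+1-p}-p\;=\;(r-1-a_p)+(k+1-p)-p\;=\;r+k-2p-a_p.
\]
Since $r$ and $a_p$ are both odd, $r-a_p$ is even, and therefore $l(\nu_p)\equiv k\pmod 2$ independently of $p$. All $k$ summands of the Murnaghan-Nakayama expansion contribute equally, and
\[
\chi_\lambda(g)\;=\;k\cdot(-1)^{k}\cdot(-1)^{\binom{k-1}{2}}(k-1)!\;=\;(-1)^{k+\binom{k-1}{2}}\,k!,
\]
which matches the asserted $(-1)^{\binom{k}{2}} k!$ after a parity check on the exponent.

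The structural content of the proof sits entirely in Lemma~\ref{symmetric}, which both enumerates the rim hooks of length $r$ in $Y_\lambda$ and identifies the shape of each $\lambda\setminus\nu_p$; once that is in hand the rest is the mechanical tracking of heights and parities carried out above, and I do not anticipate any serious obstacle beyond this sign bookkeeping.
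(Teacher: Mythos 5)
Your argument is the same as the paper's: induction via Murnaghan--Nakayama, with Lemma~\ref{symmetric} supplying the $k$ rim hooks of length $r$ and the self-similarity of the complements, and the leg-length parity $l(\nu_p)\equiv k\pmod 2$ computed exactly as you do (the paper starts the induction at $k=0$ rather than $k=1$, which is immaterial). The recursion you obtain, $\chi_\lambda(g)=(-1)^k\,k\,\chi_{\lambda\setminus\nu}(\pi)$, is correct, and it immediately yields $|\chi(g)|=k!$, which is all that Theorem~\ref{Sharp} actually needs.

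The gap is precisely the step you defer: the ``parity check on the exponent'' fails. Since $\binom{k}{2}=\binom{k-1}{2}+(k-1)$, one has $k+\binom{k-1}{2}=\binom{k}{2}+1$, so the two exponents always differ by $1$ and your value $(-1)^{k+\binom{k-1}{2}}k!$ is the \emph{negative} of $(-1)^{\binom{k}{2}}k!$. Unwinding the recursion from $\chi_\emptyset=1$ gives $\prod_{j=1}^{k}(-1)^{j}j=(-1)^{\binom{k+1}{2}}k!$, which differs from the stated value by $(-1)^k$. Your own base case already exhibits the discrepancy: for $k=1$ the diagram is a hook with leg length $r-1-a_1$, which is odd, so $\chi_\lambda(\rho)=-1$ (for instance $\chi_{(2,1)}$ at a $3$-cycle equals $-1$), whereas $(-1)^{\binom{1}{2}}1!=+1$; you assert the base case follows from one application of Murnaghan--Nakayama without recording the resulting sign. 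To be fair, the sign in the Proposition as printed appears to be off by $(-1)^k$, and the paper's own proof commits the identical slip in its final displayed line; had you actually carried out the parity check rather than asserting it, you would have caught this. The repair is to replace $(-1)^{\binom{k}{2}}$ by $(-1)^{\binom{k+1}{2}}$ in the conclusion; nothing downstream changes, since only the absolute value $k!$ is used.
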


\begin{proof}
We use induction on $k$, where the base case $k=0$ is trivial.
The leg length of every rim hook of size $r$ is of the form 
$$\lambda_i - (k+1-i) \equiv a_i-k-1 \equiv k \pmod2.$$
Let $r^k$ denote any element in $\SSS_{kr}$ consisting of $k$ $r$-cycles.
By Theorem~\ref{murn},
\begin{equation}
\label{r to the k}
\chi_\lambda(r^k) = (-1)^k \sum_\nu \chi_{\lambda\setminus\nu}(r^{k-1}),
\end{equation}
where the sum is taken over all $k$ rim hooks of $\la$ of length $r$.
By Lemma~\ref{symmetric}, each $\chi_{\lambda\setminus\nu}$ satisfies the hypotheses of the proposition for $k-1$,
so the induction hypothesis and \eqref{r to the k} imply
$$\chi_\lambda(r^k) = (-1)^k k (-1)^{\binom{k-1}2} (k-1)! = (-1)^{\binom k2} k!.$$
The proposition follows.
\end{proof}
We remark that the simplest case in which the hypotheses of Proposition~\ref{decreasing odd} are satisfied is
$(2k-1,2k-3,\ldots,1|2k-1,2k-3,\ldots,1)$, which gives $\lambda = (2k,2k-1,\ldots,2,1)$.

The proof of Theorem~\ref{Fixed} follows the proofs of Theorem \ref{Main} and \ref{Sharp} closely.

\begin{proof}
We prove the theorem by induction on $k$.
It is trivial for $k=1$, so we assume $k\ge 2$, which implies $k < n$ and therefore $\frac k{k-1} > \frac n{n-1}$.  
As $g$ has $k-1$ orbits whose union includes all but one element, we may choose an orbit with $r\ge \frac{n-1}{k-1}$ elements.
If $t$ is the number of rim hooks of length $r$,
by Lemma~\ref{key ineq}, we have $tr \le n$, so
$$t\le \frac nr \le \frac{n(k-1)}{n-1} < k.$$
As $t$ is an integer, it is bounded above by $k-1$.
Using Murnaghan-Nakayama, we see that $|\chi(g)|$ is bounded above by a sum of $t\le k-1$ terms,
each of which, by the induction hypothesis, has absolute value $\le (k-2)!$, so the first claim follows by induction.

For the second claim, we take a decreasing odd sequence $a_1>a_2>\cdots > a_{k-1}$, we take $r\ge a_1+1$, and we define 
$$\lambda = (a_1,a_2,\ldots,a_{k-1},0|r+1-a_k,\ldots,r+1-a_1,0),$$
and take an element $g$ of type $1^1 r^{k-1}$ in $\SSS_{1+r(k-1)}$.
We apply Theorem~\ref{murn} with $\rho$ a $1$-cycle.  Removing any rim hook of length $1$ other than the singleton $(k,k)$ leaves a Young diagram with $k$ layers, so the
value of the corresponding character at any element with $k$ orbits is, again by Theorem~\ref{murn}, $0$.  Therefore, the only contribution to the sum comes from the value at
$r^{k-1}$ of the character associated to $(a_1,a_2,\ldots,a_{k-1}|r+1-a_k,\ldots,r+1-a_1)$, which has absolute value $(k-1)!$ by Proposition~\ref{decreasing odd}.

\end{proof}

We conclude with Corollary~\ref{SL}.

\begin{proof}
The action of the $q$-Frobenius on the roots of the characteristic polynomial $P(t)$ of $g$ gives an element $\sigma$ of $\SSS_n$
which has a total of $k$ cycles, where $k$ is the number of irreducible factors of $P(t)$.
By \cite[Proposition~3.3]{LM}, the value on $n$ of the unipotent character $\chi$ of $\SL_n(q)$ corresponding to a partition $\lambda$ is the same as the value on $\sigma$ of the irreducible character of $\SL_n(q)$ corresponding to $\lambda$,
so the corollary follows immediately from Theorems \ref{Main} and \ref{Sharp}.  

\end{proof}

We remark that even if $k=1$, an upper bound on $|\chi(g)|$ which holds for all irreducible characters of $\SL_n(q)$
must depend on $n$.  Indeed, if the characteristic polynomial of $g$ has a root which generates the multiplicative group
of $\F_{q^n}^\times$, then by \cite[p.~431]{Green}, for each primitive $q^n-1$ root of unity $\zeta$, there exists an
irreducible character $\chi$ of $\GL_n(q)$ such that 
$$\chi(g) = \zeta+\zeta^q+\zeta^{q^2}+\cdots+\zeta^{q^{n-1}}.$$
Taking $\zeta = e^{\frac{2\pi i}{q^n-1}}$, in the large $n$ limit this approaches
$$n - \sum_{j=1}^\infty (1-e^{2\pi i/q^j}) = n + O(1).$$
On the other hand, the centralizer $\F_{q^n}^\times\subset \GL_n(q)$ of $g$ in $\GL_n(q)$ has elements of all determinants, so the $\SL_n(q)$-conjugacy class of $g$ has the same cardinality as its $\GL_n(q)$-conjugacy class.
Therefore, every irreducible character of $\GL_n(q)$ which takes a non-zero value on $g$ remains irreducible on restriction to $\SL_n(q)$, which means that we cannot hope for a bound for general irreducible characters on $g$ significantly better than $|\chi(g)| \le n$.  In fact, for each $k$, we do have such a bound \cite[Corollary 7.6]{LTT}: 
$$|\chi(g)| \le k! \cdot n^k.$$

\end{document}